\theoremstyle{plain}
\newtheorem{theorem}{Theorem}
\newtheorem{lemma}[theorem]{Lemma}
\theoremstyle{definition}
\newtheorem{definition}[theorem]{Definition}
\theoremstyle{remark}
\newtheorem{remark}[theorem]{Remark}
\newtheorem{example}[theorem]{Example}
\numberwithin{equation}{section}
\numberwithin{theorem}{section}
\renewcommand{\mathfrak}[1]{{\textbf{\upshape #1}}}
\renewcommand{\mathbf}{\bm}
\renewcommand{\emph}[1]{\textrm{{\upshape #1}}}
\renewcommand{\mathit}[1]{\mathscr #1}
\renewcommand{\mathtt}[1]{\scalebox{1}{\bfseries \texttt{\upshape #1}}}
\numberwithin{equation}{section}
\numberwithin{theorem}{section}
\renewcommand*{\backref}[1]{}
\renewcommand*{\backrefalt}[4]{[{\tiny%
    \ifcase #1 \textsl{Not cited}%
          \or \textsl{Cited on page}~\textcolor{BrickRed}{#2}%
          \else \textsl{Cited on pages}~\textcolor{BrickRed}{#2}%
    \fi%
    }]}
\author{\small\scshape S\lowercase{teven} D\lowercase{uplij}}
\address{% \small \scshape
Center for Information Technology (WWU IT),
University of M\"unster,
R\"ontgenstrasse 7-13\\
D-48149 M\"unster,
Deutschland}
\email{\small \sf douplii@uni-muenster.de;
sduplij@gmail.com;
https://ivv5hpp.uni-muenster.de/u/douplii}
\title{\large\bfseries\scshape
P\lowercase{olyadic rings of $p$-adic integers}}
\date{\textit{of start} October 10, 2022. \textit{Date}:
\textit{of completion}
November 1, 2022.
\textit{Version 2}: November 27, 2022.
\newline
\mbox{}\hskip 1.16em
\textit{Total}:
18
references.
}
\renewcommand{\refname}{\textsc{References}}
\let\origsection\section
\renewcommand{\section}[1]{\sectionmark{#1}\origsection{#1}}
\let\origsubsection\subsection
\renewcommand{\subsection}[1]{\subsectionmark{#1}\origsubsection{#1}}
\renewenvironment{thebibliography}[1]{%
  \@xp\origsection\@xp*\@xp{\refname}%
  \normalfont\footnotesize\labelsep .9em\relax
  \renewcommand\theenumiv{\arabic{enumiv}}\let\p@enumiv\@empty
  \vspace*{-5pt}% NEW
  \list{\@biblabel{\theenumiv}}{\settowidth\labelwidth{\@biblabel{#1}}%
    \leftmargin\labelwidth \advance\leftmargin\labelsep
    \usecounter{enumiv}}%
  \sloppy \clubpenalty\@M \widowpenalty\clubpenalty
  \sfcode`\.=\@m
}{%
  \def\@noitemerr{\@latex@warning{Empty `thebibliography' environment}}%
  \endlist
}
\subjclass[2010]{17A42, 20N15, 11A07, 11S31}
\keywords{polyadic semigroup, polyadic ring, arity, querelement, residue class, representative, p-adic integer}
\begin{document}
\mbox{}
\vspace{1cm}

%\vspace{1.5cm}
\mbox{}
%\vskip 1.5cm
\begin{abstract}
%\noindent
%TCIDATA{OutputFilter=latex2.dll}
%TCIDATA{Version=5.50.0.2953}
%TCIDATA{LaTeXparent=0,0,example.TEX}

\noindent  In this note we, first, recall that the sets of all representatives
of some special ordinary residue classes become $\left(  m,n\right)  $-rings.
Second, we introduce a possible $p$-adic analog of the residue class modulo a
$p$-adic integer. Then, we find the relations which determine, when the
representatives form a $\left(  m,n\right)  $-ring.
At the very short spacetime scales such rings could lead to new symmetries of modern particle models.

\end{abstract}
\maketitle

%\vspace{-1cm}
\thispagestyle{empty}

%\mbox{}

\mbox{}
\vspace{1.5cm}

\tableofcontents
\newpage

\pagestyle{fancy}

\addtolength{\footskip}{15pt}

\renewcommand{\sectionmark}[1]{%
\markboth{
%\textmd{\  \thesection.}
{ \scshape #1}}{}}

\renewcommand{\subsectionmark}[1]{%
\markright{
\mbox{\;}\\[5pt]
\textmd{#1}}{}}

\fancyhead{}
\fancyhead[EL,OR]{\leftmark}
\fancyhead[ER,OL]{\rightmark}
\fancyfoot[C]{\scshape -- \textcolor{BrickRed}{\thepage} --}

\renewcommand\headrulewidth{0.5pt}
\fancypagestyle {plain1}{ %
\fancyhf{}
\renewcommand {\headrulewidth }{0pt}
\renewcommand {\footrulewidth }{0pt}
}

\fancypagestyle{plain}{ %
\fancyhf{}
\fancyhead[C]{\scshape S\lowercase{teven} D\lowercase{uplij} \hskip 0.7cm \MakeUppercase{Polyadic Hopf algebras and quantum groups}}
\fancyfoot[C]{\scshape - \thepage  -}
\renewcommand {\headrulewidth }{0pt}
\renewcommand {\footrulewidth }{0pt}
}

\fancypagestyle{fancyref}{ %
\fancyhf{} % remove everything
\fancyhead[C]{\scshape R\lowercase{eferences} }
\fancyfoot[C]{\scshape -- \textcolor{BrickRed}{\thepage} --}
\renewcommand {\headrulewidth }{0.5pt}
\renewcommand {\footrulewidth }{0pt}
}

\fancypagestyle{emptyf}{
\fancyhead{}
\fancyfoot[C]{\scshape -- \textcolor{BrickRed}{\thepage} --}
\renewcommand{\headrulewidth}{0pt}
}
\mbox{}
%\vskip 3.5cm
\thispagestyle{emptyf}
%TCIDATA{OutputFilter=latex2.dll}
%TCIDATA{Version=5.50.0.2953}
%TCIDATA{LaTeXparent=0,0,example.TEX}

\section{\textsc{Introduction}}

The fundamental conception of $p$-adic numbers is based on a special extension
of rational numbers that is an alternative to real and complex
numbers. The main idea is the completion of the rational numbers with respect
to the $p$-adic norm, which is non-Archimedean. Nowadays, $p$-adic methods are
widely used in number theory \cite{neurkich,samuel}, arithmetic geometry
\cite{ber/ogu,stum} and algorithmic computations \cite{car2017}. In
mathematical physics, a non-Archimedean approach to spacetime and string
dynamics at the Planck scale leads to new symmetries of particle models (see,
e.g., \cite{dra/khr/koz,vla/vol/zel} and the references therein). For~some special
applications, see, e.g., \cite{sar/hum/hua,zad/sur/tun}. General reviews are
given in \cite{koblitz,robert,schikhof}.

Previously, we have studied the algebraic structure of the representative set
in a fixed ordinary residue class \cite{dup2017a}. We found that the set of
representatives becomes a polyadic or $\left(  m,n\right)  $-ring, if the
parameters of a class satisfy special ``quantization'' conditions. We have found
that similar polyadic structures appear naturally for $p$-adic integers, if we
introduce informally a $p$-adic analog of the residue classes, and we investigate
here the set of its representatives along the lines of
\cite{dup2017a,dup2019,duplij2022}.

\section{\label{sec-res}$\left(  m,n\right)  $-\textsc{rings of integer
numbers from residue classes}}

Here we recall that representatives of special residue (congruence) classes
can form polyadic rings, as was found in \cite{dup2017a,dup2019} (see also
notation from \cite{duplij2022}).

Let us denote the residue (congruence) class of an integer $a$ modulo $b$ by%
\begin{equation}
\left[  a\right]  _{b}=\left\{  \left\{  r_{k}\left(  a,b\right)  \right\}
\mid k\in\mathbb{Z},a\in\mathbb{Z}_{+},b\in\mathbb{N},0\leq a\leq b-1\right\}
, \label{ab}%
\end{equation}
where $r_{k}\left(  a,b\right)  =a+bk$ is a generic representative element of
the class $\left[  a\right]  _{b}$. The~canonical representative is the smallest
non-negative number of these. Informally, $a$ is the remainder of
$r_{k}\left(  a,b\right)  $ when divided by $b$. The corresponding equivalence
relation (congruence modulo $b$) is denoted by%
\begin{equation}
r=a\left(  \operatorname{mod}b\right)  . \label{ra}%
\end{equation}

Introducing the binary operations between classes $\left(  +_{cl},\times
_{cl}\right)  $, the addition $\left[  a_{1}\right]  _{b}+_{cl}\left[
a_{2}\right]  _{b}=\left[  a_{1}+a_{2}\right]  _{b}$, and the multiplication
$\left[  a_{1}\right]  _{b}\times_{cl}\left[  a_{2}\right]  _{b}=\left[
a_{1}a_{2}\right]  _{b}$, the residue class (binary) finite commutative ring
$\mathbb{Z}\diagup b\mathbb{Z}$ (with identity) is defined in the standard method
(which was named \textquotedblleft external\textquotedblright\ \cite{dup2017a}%
). If $a\neq0$ and $b$ is prime, then $\mathbb{Z}\diagup b\mathbb{Z}$ becomes
a finite field.

The set of representatives $\left\{  r_{k}\left(  a,b\right)  \right\}  $ in a
given class $\left[  a\right]  _{b}$ does not form a binary ring, because
there are no binary operations (addition and multiplication) which are
simultaneously closed for arbitrary $a$ and $b$. Nevertheless, the following
polyadic operations on representatives $r_{k}=r_{k}\left(  a,b\right)  $,
$m$-ary addition $\nu_{m}$%
\begin{equation}
\nu_{m}\left[  r_{k_{1}},r_{k_{2}},\ldots,r_{k_{m}}\right]  =r_{k_{1}%
}+r_{k_{2}}+\ldots+r_{k_{m}}, \label{nu}%
\end{equation}
and $n$-ary multiplication $\mu_{n}$%
\begin{equation}
\mu_{n}\left[  r_{k_{1}},r_{k_{2}},\ldots,r_{k_{n}}\right]  =r_{k_{1}}%
r_{k_{2}}\ldots r_{k_{n}},\ \ \ r_{k_{i}}\in\left[  a\right]  _{b},\ k_{i}%
\in\mathbb{Z}, \label{mu}%
\end{equation}
can be closed but~only for special values of $a=a_{q}$ and $b=b_{q}$, which
defines the nonderived $\left(  m,n\right)  $-ary ring%
\begin{equation}
\mathbb{Z}_{\left(  m,n\right)  }\left(  a_{q},b_{q}\right)  =\left\langle
\left[  a_{q}\right]  _{b_{q}}\mid\nu_{m},\mu_{n}\right\rangle \label{z}%
\end{equation}
of polyadic integers (that was called the \textquotedblleft
internal\textquotedblright\ way \cite{dup2017a}). The conditions of closure
for the operations between representatives can be formulated in terms of the
(arity shape~\cite{dup2019}) invariants (which may be seen as some form of
\textquotedblleft quantization\textquotedblright)%
\begin{align}
\left(  m-1\right)  \frac{a_{q}}{b_{q}}  &  =I_{m}\left(  a_{q},b_{q}\right)
\in\mathbb{N},\label{an1}\\
a_{q}^{n-1}\frac{a_{q}-1}{b_{q}}  &  =J_{n}\left(  a_{q},b_{q}\right)
\in\mathbb{N}, \label{an2}%
\end{align}
or, equivalently, using the congruence relations \cite{dup2017a}%
\begin{align}
ma_{q}  &  \equiv a_{q}\left(  \operatorname{mod}b_{q}\right)  ,\label{am1}\\
a_{q}^{n}  &  \equiv a_{q}\left(  \operatorname{mod}b_{q}\right)  ,
\label{am2}%
\end{align}
where we have denoted by $a_{q}$ and $b_{q}$ the concrete solutions of the
\textquotedblleft quantization\textquotedblright\ equations (\ref{an1}%
)--(\ref{am2}). To understand the nature of the \textquotedblleft
quantization\textquotedblright, we consider in detail the concrete example of
nonderived $m$-ary addition and $n$-ary multiplication appearance for
representatives in a fixed residue class.

\begin{example}
Let us consider the following residue class%
\begin{equation}
\left[  \left[  3\right]  \right]  _{4}=\left\{  \ldots
-45,-33,-29,-25,-21,-17,-13,-9,-5,-1,3,7,11,15,19,23,27,31\ldots\right\}  ,
\label{34}%
\end{equation}
where the representatives are%
\begin{equation}
r_{k}=r_{k}\left(  3,4\right)  =3+4k,\ \ \ \ k\in\mathbb{Z}. \label{r34}%
\end{equation}

We first obtain the condition for when the sum of $m$ representatives belongs to
the class (\ref{34}). So, we compute step by step%
\begin{align}
m  &  =2,\ \ \ r_{k_{1}}+r_{k_{2}}=r_{k}+3\notin\left[  \left[  3\right]
\right]  _{4},\ \ \ k=k_{1}+k_{2},\label{r5}\\
m  &  =3,\ \ \ r_{k_{1}}+r_{k_{2}}+r_{k_{3}}=r_{k}+6\notin\left[  \left[
3\right]  \right]  _{4},\ \ \ k=k_{1}+k_{2}+k_{3},\\
m  &  =4,\ \ \ r_{k_{1}}+r_{k_{2}}+r_{k_{3}}+r_{k_{4}}=r_{k}+9\notin\left[
\left[  3\right]  \right]  _{4},\ \ \ k=k_{1}+k_{2}+k_{3}+k_{4},\\
m  &  =5,\ \ \ r_{k_{1}}+r_{k_{2}}+r_{k_{3}}+r_{k_{4}}+r_{k_{5}}%
=\fbox{r$_{k}\in\left[  \left[  3\right]  \right]  _{4}$},\ \ \ k=k_{1}%
+k_{2}+k_{3}+k_{4}+k_{5}+3.\label{m5}\\
&  \vdots\nonumber
\end{align}
Thus, the binary, ternary and $4$-ary additions are not closed, while $5$-ary
addition is. In general, the closure of $m$-ary addition holds valid when
$4\mid\left(  m-1\right)  $, that is for $m=5,9,13,17,\ldots$, such that%
\begin{equation}
\sum_{i=1}^{m}r_{k_{i}}=r_{k}\in\left[  \left[  3\right]  \right]
_{4},\ \ \ k=\sum_{i=1}^{m}k_{i}+3\ell_{\nu},
\end{equation}
where $\ell_{\nu}=\frac{m-1}{4}\in\mathbb{N}$ is a natural number. The
\textquotedblleft quantization\textquotedblright\ rule for the arity of
addition (\ref{an1}), (\ref{am1}) becomes $m=4\ell_{\nu}+1$.

If we consider the minimal arity $m=5$, we arrive at the conclusion that
$\left\langle \left\{  r_{k}\right\}  \mid\nu_{5}\right\rangle $, is a $5$-ary
commutative semigroup, where $\nu_{5}$ is the nonderived (i.e. not composed from
lower arity operations) $5$-ary addition%
\begin{equation}
\nu_{5}\left[  r_{k_{1}},r_{k_{2}},r_{k_{3}},r_{k_{4}},r_{k_{5}}\right]
=r_{k_{1}}+r_{k_{2}}+r_{k_{3}}+r_{k_{4}}+r_{k_{5}},\ \ \ r_{k_{i}}\in\left[
3\right]  _{4}, \label{nu1}%
\end{equation}
given by (\ref{m5}), and total $5$-ary associativity follows from that of
the binary addition in (\ref{nu}) and (\ref{nu1}). In this case $\ell_{\nu}$
is the \textquotedblleft number\textquotedblright\ of composed $5$-ary
additions (the polyadic power). There is no neutral element $z$ for $5$-ary
addition $\nu_{5}$ (\ref{nu1}) defined by $\nu_{5}\left[  z,z,z,z,r_{k}%
\right]  =r_{k}$, and so the semigroup $\left\langle \left\{  r_{k}\right\}
\mid\nu_{5}\right\rangle $ is zeroless. Nevertheless, $\left\langle \left\{
r_{k}\right\}  \mid\nu_{5}\right\rangle $ is a $5$-ary group (which is
impossible in the binary case, where all groups contain a neutral element, the
identity), because each element $r_{k}$ has a unique querelement
$\widetilde{r_{k}}$ defined by (see,e.g., \cite{duplij2022})%
\begin{equation}
\nu_{5}\left[  r_{k},r_{k},r_{k},r_{k},\widetilde{r_{k}}\right]
=r_{k},\ \ \ r_{k},\widetilde{r_{k}}\in\left[  3_{4}\right]  . \label{vr}%
\end{equation}
Therefore, from (\ref{m5}) and (\ref{nu1}) it follows that $\tilde{r}%
_{k}=-3r_{k}=r_{-9-12k}$. For instance, for the first several elements of the
residue class $\left[  3\right]  _{4}$ (\ref{34}), we have the following
querelements%
\begin{align}
\widetilde{7}  &  =-21,\ \ \widetilde{11}=-33,\ \ \widetilde{15}=-45,\\
\widetilde{-1}  &  =3,\ \ \widetilde{-5}=15,\ \ \widetilde{-9}=27.
\end{align}

Note that in the $5$-ary group $\left\langle \left\{  r_{k}\right\}  \mid\nu
_{5},\widetilde{\left(  \cdot\right)  }\right\rangle $, the additive
quermapping defined by $r_{k}\mapsto\widetilde{r_{k}}$ is not a reflection
(of any order) for $m\geq3$, i.e.,~$\widetilde{\widetilde{r_{k}}}\neq r_{k}$
(as opposed to the inverse in the binary case) \cite{duplij2022}.

Now, we turn to the multiplication of $n$ representatives (\ref{r34}) of the
residue class $\left[  3\right]  _{4}$ (\ref{34}). By analogy with
(\ref{r5})--(\ref{m5}) we obtain, step by step%
\begin{equation}%
\begin{array}
[c]{ll}%
n=2 & r_{k_{1}}r_{k_{2}}=r_{k}+6\notin\left[  \left[  3\right]  \right]
_{4},\ \ \ k=3k_{1}+3k_{2}+4k_{1}k_{2},\\[5pt]%
n=3 & \left\{
\begin{array}
[c]{l}%
r_{k_{1}}r_{k_{2}}r_{k_{3}}=\fbox{r$_{k}\in\left[  \left[  3\right]  \right]
_{4}$},\\
k=9k_{1}+9k_{2}+9k_{3}+12k_{1}k_{2}+12k_{1}k_{3}+12k_{2}k_{3}+16k_{1}%
k_{2}k_{3}+6,
\end{array}
\right. \\[10pt]%
n=4 & \left\{
\begin{array}
[c]{l}%
r_{k_{1}}r_{k_{2}}r_{k_{3}}r_{k_{4}}=r_{k}+2\notin\left[  \left[  3\right]
\right]  _{4},\\
k=27k_{1}+27k_{2}+27k_{3}+27k_{4}+36k_{1}k_{2}+36k_{1}k_{3}+36k_{1}k_{4}\\
+36k_{2}k_{3}+36k_{2}k_{4}+36k_{3}k_{4}+48k_{1}k_{2}k_{3}+48k_{1}k_{2}%
k_{4}+48k_{1}k_{3}k_{4}\\
+48k_{2}k_{3}k_{4}+64k_{1}k_{2}k_{3}k_{4}+19,
\end{array}
\right. \\[15pt]%
n=5 & \left\{
\begin{array}
[c]{l}%
r_{k_{1}}r_{k_{2}}r_{k_{3}}r_{k_{4}}r_{k_{5}}=\fbox{r$_{k}\in\left[  \left[
3\right]  \right]  _{4}$},\\
k=81k_{1}+81k_{2}+81k_{3}+81k_{4}+81k_{5}+108k_{1}k_{2}+108k_{1}k_{3}%
+108k_{1}k_{4}\\
+108k_{2}k_{3}+108k_{1}k_{5}+108k_{2}k_{4}+108k_{2}k_{5}+108k_{3}%
k_{4}+108k_{3}k_{5}+108k_{4}k_{5}\\
+144k_{1}k_{2}k_{3}+144k_{1}k_{2}k_{4}+144k_{1}k_{2}k_{5}+144k_{1}k_{3}%
k_{4}+144k_{1}k_{3}k_{5}+144k_{2}k_{3}k_{4}\\
+144k_{1}k_{4}k_{5}+144k_{2}k_{3}k_{5}+144k_{2}k_{4}k_{5}+144k_{3}k_{4}%
k_{5}+192k_{1}k_{2}k_{3}k_{4}+192k_{1}k_{2}k_{3}k_{5}\\
+192k_{1}k_{2}k_{4}k_{5}+192k_{1}k_{3}k_{4}k_{5}+192k_{2}k_{3}k_{4}%
k_{5}+256k_{1}k_{2}k_{3}k_{4}k_{5}+60.
\end{array}
\right.
\end{array}
\label{n5}%
\end{equation}

By direct computation, we observe that the binary and $4$-ary multiplications
are not closed, but the ternary and $5$-ary ones are closed. In general, the
product of $n=2\ell_{\mu}+1$ ($\ell_{\mu}\in\mathbb{N}$) elements of the
residue class $\left[  3\right]  _{4}$ is in the class, which is the
\textquotedblleft quantization\textquotedblright\ rule for multiplication
(\ref{an2}) and (\ref{am2}). Again we consider the minimal arity $n=3$ of
multiplication and observe that $\left\langle \left\{  r_{k}\right\}  \mid
\mu_{3}\right\rangle $ is a commutative ternary semigroup, where%
\begin{equation}
\mu_{3}\left[  r_{k_{1}},r_{k_{2}},r_{k_{3}}\right]  =r_{k_{1}}r_{k_{2}%
}r_{k_{3}},\ \ \ \ r_{k_{i}}\in\left[  3\right]  _{4} \label{mu1}%
\end{equation}
is a nonderived ternary multiplication, and the total ternary associativity
of $\mu_{3}$ is governed by associativity of the binary product in (\ref{mu})
and (\ref{mu1}). As opposed to the $5$-ary addition, $\left\langle \left\{
r_{k}\right\}  \mid\mu_{3}\right\rangle $ is not a group, because not all
elements have a unique querelement. However, a polyadic identity $e$
defined by (i.e. as a neutral element of the ternary multiplication $\mu_{3}$)%
\begin{equation}
\mu_{3}\left[  e,e,r_{k}\right]  =r_{k},\ \ e,r_{k}\in\left[  3\right]  _{4},
\label{e}%
\end{equation}
exists and is equal to $e=-1$.

The polyadic distributivity between $\nu_{5}$ and $\mu_{3}$ \cite{duplij2022}
follows from the binary distributivity in $\mathbb{Z}$ and (\ref{nu1}%
),(\ref{mu1}), and therefore the residue class $\left[  3\right]  _{4}$ has
the algebraic structure of the polyadic ring%
\begin{equation}
\mathbb{Z}_{\left(  5,3\right)  }=\mathbb{Z}_{\left(  5,3\right)  }\left(
3,4\right)  =\left\langle \left\{  r_{k}\right\}  \mid\nu_{5},\widetilde
{\left(  \cdot\right)  },\mu_{3},e\right\rangle ,\ \ \ e,r_{k}\in\left[
3\right]  _{4},
\end{equation}
which is a commutative zeroless $\left(  5,3\right)  $-ring with the additive
quermapping $\widetilde{\left(  \cdot\right)  }$ (\ref{vr}) and the
multiplicative neutral element $e$ (\ref{e}).
\end{example}

The arity shape of the ring of polyadic integers $\mathbb{Z}_{\left(
m,n\right)  }\left(  a_{q},b_{q}\right)  $ (\ref{z}) is the (surjective)
mapping%
\begin{equation}
\left(  a_{q},b_{q}\right)  \Longrightarrow\left(  m,n\right)  . \label{aq}%
\end{equation}

The mapping (\ref{aq}) for the lowest values of $a_{q},b_{q}$ is given in
\textsc{Table} \ref{T1} ($I=I_{m}\left(  a_{q},b_{q}\right)  $, $J=J_{n}%
\left(  a_{q},b_{q}\right)  $).
\begin{table}[H]
\caption{The arity shape mapping (\ref{aq}) for the polyadic ring
$\mathbb{Z}_{\left(  m,n\right)  }\left(  a_{q},b_{q}\right)  $ (\ref{z}).
Empty cells indicate that no such structure exists.  }
\begin{center}
{\tiny
\begin{tabular}
[c]{||c||c|c|c|c|c|c|c|c|c||}\hline\hline
$a_{q}\setminus b_{q}$ & 2 & 3 & 4 & 5 & 6 & 7 & 8 & 9 & 10\\\hline\hline
1 & $%
\begin{array}
[c]{c}%
m=\mathbf{3}\\
n=\mathbf{2}\\
I=1\\
J=0
\end{array}
$ & $%
\begin{array}
[c]{c}%
m=\mathbf{4}\\
n=\mathbf{2}\\
I=1\\
J=0
\end{array}
$ & $%
\begin{array}
[c]{c}%
m=\mathbf{5}\\
n=\mathbf{2}\\
I=1\\
J=0
\end{array}
$ & $%
\begin{array}
[c]{c}%
m=\mathbf{6}\\
n=\mathbf{2}\\
I=1\\
J=0
\end{array}
$ & $%
\begin{array}
[c]{c}%
m=\mathbf{7}\\
n=\mathbf{2}\\
I=1\\
J=0
\end{array}
$ & $%
\begin{array}
[c]{c}%
m=\mathbf{8}\\
n=\mathbf{2}\\
I=1\\
J=0
\end{array}
$ & $%
\begin{array}
[c]{c}%
m=\mathbf{9}\\
n=\mathbf{2}\\
I=1\\
J=0
\end{array}
$ & $%
\begin{array}
[c]{c}%
m=\mathbf{10}\\
n=\mathbf{2}\\
I=1\\
J=0
\end{array}
$ & $%
\begin{array}
[c]{c}%
m=\mathbf{11}\\
n=\mathbf{2}\\
I=1\\
J=0
\end{array}
$\\\hline
2 &  & $%
\begin{array}
[c]{c}%
m=\mathbf{4}\\
n=\mathbf{3}\\
I=2\\
J=2
\end{array}
$ &  & $%
\begin{array}
[c]{c}%
m=\mathbf{6}\\
n=\mathbf{5}\\
I=2\\
J=6
\end{array}
$ & $%
\begin{array}
[c]{c}%
m=\mathbf{4}\\
n=\mathbf{3}\\
I=1\\
J=1
\end{array}
$ & $%
\begin{array}
[c]{c}%
m=\mathbf{8}\\
n=\mathbf{4}\\
I=2\\
J=2
\end{array}
$ &  & $%
\begin{array}
[c]{c}%
m=\mathbf{10}\\
n=\mathbf{7}\\
I=2\\
J=14
\end{array}
$ & $%
\begin{array}
[c]{c}%
m=\mathbf{6}\\
n=\mathbf{5}\\
I=1\\
J=3
\end{array}
$\\\hline
3 &  &  & $%
\begin{array}
[c]{c}%
m=\mathbf{5}\\
n=\mathbf{3}\\
I=3\\
J=6
\end{array}
$ & $%
\begin{array}
[c]{c}%
m=\mathbf{6}\\
n=\mathbf{5}\\
I=3\\
J=48
\end{array}
$ & $%
\begin{array}
[c]{c}%
m=\mathbf{3}\\
n=\mathbf{2}\\
I=1\\
J=1
\end{array}
$ & $%
\begin{array}
[c]{c}%
m=\mathbf{8}\\
n=\mathbf{7}\\
I=3\\
J=312
\end{array}
$ & $%
\begin{array}
[c]{c}%
m=\mathbf{9}\\
n=\mathbf{3}\\
I=3\\
J=3
\end{array}
$ &  & $%
\begin{array}
[c]{c}%
m=\mathbf{11}\\
n=\mathbf{5}\\
I=3\\
J=24
\end{array}
$\\\hline
4 &  &  &  & $%
\begin{array}
[c]{c}%
m=\mathbf{6}\\
n=\mathbf{3}\\
I=4\\
J=12
\end{array}
$ & $%
\begin{array}
[c]{c}%
m=\mathbf{4}\\
n=\mathbf{2}\\
I=2\\
J=2
\end{array}
$ & $%
\begin{array}
[c]{c}%
m=\mathbf{8}\\
n=\mathbf{4}\\
I=4\\
J=36
\end{array}
$ &  & $%
\begin{array}
[c]{c}%
m=\mathbf{10}\\
n=\mathbf{4}\\
I=4\\
J=28
\end{array}
$ & $%
\begin{array}
[c]{c}%
m=\mathbf{6}\\
n=\mathbf{3}\\
I=2\\
J=6
\end{array}
$\\\hline
5 &  &  &  &  & $%
\begin{array}
[c]{c}%
m=\mathbf{7}\\
n=\mathbf{3}\\
I=5\\
J=20
\end{array}
$ & $%
\begin{array}
[c]{c}%
m=\mathbf{8}\\
n=\mathbf{7}\\
I=11\\
J=11160
\end{array}
$ & $%
\begin{array}
[c]{c}%
m=\mathbf{9}\\
n=\mathbf{3}\\
I=5\\
J=15
\end{array}
$ & $%
\begin{array}
[c]{c}%
m=\mathbf{10}\\
n=\mathbf{7}\\
I=5\\
J=8680
\end{array}
$ & $%
\begin{array}
[c]{c}%
m=\mathbf{3}\\
n=\mathbf{2}\\
I=1\\
J=2
\end{array}
$\\\hline
6 &  &  &  &  &  & $%
\begin{array}
[c]{c}%
m=\mathbf{8}\\
n=\mathbf{3}\\
I=6\\
J=30
\end{array}
$ &  &  & $%
\begin{array}
[c]{c}%
m=\mathbf{6}\\
n=\mathbf{2}\\
I=3\\
J=3
\end{array}
$\\\hline
7 &  &  &  &  &  &  & $%
\begin{array}
[c]{c}%
m=\mathbf{9}\\
n=\mathbf{3}\\
I=7\\
J=42
\end{array}
$ & $%
\begin{array}
[c]{c}%
m=\mathbf{10}\\
n=\mathbf{4}\\
I=7\\
J=266
\end{array}
$ & $%
\begin{array}
[c]{c}%
m=\mathbf{11}\\
n=\mathbf{5}\\
I=7\\
J=1680
\end{array}
$\\\hline
8 &  &  &  &  &  &  &  & $%
\begin{array}
[c]{c}%
m=\mathbf{10}\\
n=\mathbf{3}\\
I=8\\
J=56
\end{array}
$ & $%
\begin{array}
[c]{c}%
m=\mathbf{6}\\
n=\mathbf{5}\\
I=4\\
J=3276
\end{array}
$\\\hline
9 &  &  &  &  &  &  &  &  & $%
\begin{array}
[c]{c}%
m=\mathbf{11}\\
n=\mathbf{3}\\
I=9\\
J=72
\end{array}
$\\\hline\hline
\end{tabular}
}
\end{center}
\label{T1}%
\end{table}

The binary ring of ordinary integers $\mathbb{Z}$ corresponds to $\left(
a_{q}=0,b_{q}=1\right)  \Longrightarrow\left(  2,2\right)  $ or $\mathbb{Z=Z}%
_{\left(  2,2\right)  }\left(  0,1\right)  $, $I=J=0$.

\section{\textsc{Representations of }$p$\textsc{-adic integers}}

Let us explore briefly some well-known definitions regarding $p$-adic integers
to establish notations (for reviews, see \cite{gouvea,koblitz,robert}).

A $p$-adic integer is an infinite formal sum of the form%
\begin{equation}
\mathbf{x}=\mathbf{x}\left(  p\right)  =\alpha_{0}+\alpha_{1}p+\alpha_{2}%
p^{2}+\ldots+\alpha_{i-1}p^{i-1}+\alpha_{i}p^{i}+\alpha_{i+1}p^{i+1}%
+\ldots,\ \ \ \alpha_{i}\in\mathbb{Z}, \label{x}%
\end{equation}
where the digits (denoted by Greek letters from the beginning of alphabet)
$0\leq\alpha_{i}\leq p-1$, and $p\geq2$ is a fixed prime number. The expansion
(\ref{x}) is called standard (or canonical), and $\alpha_{i}$ are the $p$-adic
digits which are usually written from the right to the left (positional
notation) $\mathbf{x=.\ }...\alpha_{i+1}\alpha_{i}\alpha_{i-1}\ldots\alpha
_{2}\alpha_{1}\alpha_{0}$ or sometimes $\mathbf{x=}\left\{  \alpha_{0}%
,\alpha_{1},\alpha_{2},\ldots,\alpha_{i-1},\alpha_{i},\alpha_{i+1}%
\ldots\right\}  $. The set of $p$-adic integers is a commutative ring (of
$p$-adic integers) denoted by $\mathbb{Z}_{p}=\left\{  \mathbf{x}\right\}  $,
and the ring of ordinary integers (sometimes called \textquotedblleft
rational\textquotedblright\ integers) $\mathbb{Z}$ is its (binary) subring.

The so called coherent representation of $\mathbb{Z}_{p}$ is based on the
(inverse) projective limit of finite fields $\mathbb{Z}\diagup p^{l}%
\mathbb{Z}$, because the surjective map $\mathbb{Z}_{p}\longrightarrow
\mathbb{Z}\diagup p^{l}\mathbb{Z}$ defined by%
\begin{equation}
\alpha_{0}+\alpha_{1}p+\alpha_{2}p^{2}+\ldots+\alpha_{i}p^{i}+\ldots
\mapsto\left(  \alpha_{0}+\alpha_{1}p+\alpha_{2}p^{2}+\ldots+\alpha
_{l-1}p^{l-1}\right)  \operatorname{mod}p^{l}%
\end{equation}
is a ring homomorphism. In this case, a $p$-adic integer is the infinite
Cauchy sequence that converges to%
\begin{equation}
\mathbf{x}=\mathbf{x}\left(  p\right)  =\left\{  x_{i}\left(  p\right)
\right\}  _{i=1}^{\infty}=\left\{  x_{1}\left(  p\right)  ,x_{2}\left(
p\right)  ,\ldots,x_{i}\left(  p\right)  \ldots\right\}  , \label{xc}%
\end{equation}
where%
\begin{equation}
x_{i}\left(  p\right)  =\alpha_{0}+\alpha_{1}p+\alpha_{2}p^{2}+\ldots
+\alpha_{l-1}p^{l-1}%
\end{equation}
with the coherency condition%
\begin{equation}
x_{i+1}\left(  p\right)  \equiv x_{i}\left(  p\right)  \operatorname{mod}%
p^{i},\ \ \ \forall i\geq1, \label{cog}%
\end{equation}
and the $p$-adic digits are $0\leq\alpha_{i}\leq p-1$.

If $0\leq x_{i}\left(  p\right)  \leq p^{i}-1$ for all $i\geq1$, then the
coherent representation (\ref{xc}) is said to be reduced. The~ordinary integers
$x\in\mathbb{Z}$ embed into $p$-adic integers as constant infinite sequences
by $x\mapsto\left\{  x,x,\ldots,x,\ldots\right\}  $.

Using the fact that the process of reducing modulo $p^{i}$ is equivalent to
vanishing the last $i$ digits, the coherency condition (\ref{cog}) leads to a
sequence of partial sums \cite{gouvea}%
\begin{equation}
\mathbf{x}=\mathbf{x}\left(  p\right)  =\left\{  y_{i}\left(  p\right)
\right\}  _{i=1}^{\infty}=\left\{  y_{1}\left(  p\right)  ,y_{2}\left(
p\right)  ,\ldots,y_{i}\left(  p\right)  \ldots\right\}  , \label{xy}%
\end{equation}
where%
\begin{equation}
y_{1}\left(  p\right)  =\alpha_{0},\ y_{2}\left(  p\right)  =\alpha_{0}%
+\alpha_{1}p,\ y_{3}\left(  p\right)  =\alpha_{0}+\alpha_{1}p+\alpha_{2}%
p^{2},\ y_{4}\left(  p\right)  =\alpha_{0}+\alpha_{1}p+\alpha_{2}p^{2}%
+\alpha_{3}p^{3},\ \ldots\ . \label{y}%
\end{equation}

Sometimes, the partial sum representation (\ref{xy}) is simpler for $p$-adic
integer computations.

\section{$\left(  m,n\right)  $\textsc{-rings of }$p$\textsc{-adic integers}}

As may be seen from \textsc{Section \ref{sec-res}} and \cite{dup2017a,dup2019}%
, the construction of the nonderived $\left(  m,n\right)  $-rings of ordinary
(\textquotedblleft rational\textquotedblright) integers $\mathbb{Z}_{\left(
m,n\right)  }\left(  a_{q},b_{q}\right)  $ (\ref{z}) can be performed in terms of
residue class representatives (\ref{ab}). To introduce a $p$-adic analog of
the residue class (\ref{ab}), one needs some ordering concept, which does not
exist for $p$-adic integers \cite{gouvea}. Nevertheless, one could informally
define the following analog of ordering.

\begin{definition}
A \textquotedblleft componentwise strict order\textquotedblright\ $<_{comp}$
is a multicomponent binary relation between $p$-adic numbers $\mathbf{a}%
=\left\{  \alpha_{i}\right\}  _{i=0}^{\infty}$, $0\leq\alpha_{i}\leq p-1$ and
$\mathbf{b}=\left\{  \beta_{i}\right\}  _{i=0}^{\infty}$, $0\leq\beta_{i}\leq
p-1$, such that%
\begin{equation}
\mathbf{a}<_{comp}\mathbf{b}\Longleftrightarrow\alpha_{i}<\beta_{i}%
,\ \ \ \text{for\ all}\ i=0,\ldots,\infty,\ \ \mathbf{a},\mathbf{b}%
\in\mathbb{Z}_{p},\ \ \alpha_{i},\beta_{i}\in\mathbb{Z}.
\end{equation}

A \textquotedblleft componentwise nonstrict order\textquotedblright%
\ $\leq_{comp}$ is defined in the same way, but using the nonstrict order
$\leq$ for component integers from $\mathbb{Z}$ (digits).
\end{definition}

Using this definition, we can define a $p$-adic analog of the residue class
informally by changing $\mathbb{Z}$ to $\mathbb{Z}_{p}$ in (\ref{ab}).

\begin{definition}
A $p$-adic analog of the residue class of $\mathbf{a}$ modulo $\mathbf{b}$ is%
\begin{equation}
\left[  \mathbf{a}\right]  _{\mathbf{b}}=\left\{  \left\{  \mathbf{r}%
_{\mathbf{k}}\left(  \mathbf{a},\mathbf{b}\right)  \right\}  \mid
\mathbf{a},\mathbf{b},\mathbf{k}\in\mathbb{Z}_{p},0\leq\mathbf{a}%
<\mathbf{b}\right\}  , \label{ab1}%
\end{equation}
and the generic representative of the class is%
\begin{equation}
\mathbf{r}_{\mathbf{k}}\left(  \mathbf{a},\mathbf{b}\right)  =\mathbf{a}%
+_{p}\mathbf{b\bullet}_{p}\mathbf{k}, \label{rk}%
\end{equation}
where $+_{p}$ and $\mathbf{\bullet}_{p}$ are the binary sum and the binary
product of $p$-adic integers (we treat them componentwise in the partial sum
representation (\ref{y})), and~the $i$th component of (\ref{rk})'s right hand side is
computed by $\operatorname{mod}p^{i}$.
\end{definition}

As with the ordinary (\textquotedblleft rational\textquotedblright) integers
(\ref{ab}), the~$p$-adic integer $\mathbf{a}$ can be treated as a type of
remainder for the representative $\mathbf{r}_{\mathbf{k}}\left(
\mathbf{a},\mathbf{b}\right)  $ when divided by the $p$-adic integer
$\mathbf{b}$. We denote the corresponding $p$-adic analog of (\ref{ra})
(informally, a $p$-adic analog of the congruence modulo $\mathbf{b}$) as%
\begin{equation}
\mathbf{r}=\mathbf{a}\left(  \operatorname{Mod}_{p}\mathbf{b}\right)  .
\label{ra1}%
\end{equation}

\begin{remark}
In general, to build a nonderived $\left(  m,n\right)  $-ring along the lines
of \textsc{Section \ref{sec-res}}, we do not need any analog of the residue
class at all, but only the concrete form of the representative (\ref{rk}).
Then demanding the closure of $m$-ary addition (\ref{nu}) and $n$-ary
multiplication (\ref{mu}), we obtain conditions on the parameters (now digits
of $p$-adic integers) similar to (\ref{an1}) and (\ref{an2}).
\end{remark}

In the partial sum representation (\ref{xy}), the case of ordinary
(\textquotedblleft rational\textquotedblright) integers corresponds to the
first component (first digit $\alpha_{0}$) of the $p$-adic integer (\ref{y}),
and higher components can be computed using the explicit formulas for sum and
product of $p$-adic integers \cite{xu/dai}. Because they are too cumbersome,
we present here the \textquotedblleft block-schemes\textquotedblright\ of the
computations, while concrete examples can be obtained componentwise using
(\ref{y}).

\begin{lemma}
The $p$-adic analog of the residue class (\ref{ab1}) is a commutative $m$-ary
group $\left\langle \left[  \mathbf{a}\right]  _{\mathbf{b}}\mid\mathbf{\nu
}_{m}\right\rangle $, if%
\begin{equation}
\left(  m-1\right)  \mathbf{a}=\mathbf{b\bullet}_{p}\mathbf{I}, \label{ma}%
\end{equation}
where $\mathbf{I}$ is a $p$-adic integer (addition shape invariant), and the
nonderived $m$-ary addition $\mathbf{\nu}_{m}$ is the repeated binary sum of
$m$ representatives $\mathbf{r}_{\mathbf{k}}=\mathbf{r}_{\mathbf{k}}\left(
\mathbf{a},\mathbf{b}\right)  $%
\begin{equation}
\mathbf{\nu}_{m}\left[  \mathbf{r}_{\mathbf{k}_{1}},\mathbf{r}_{\mathbf{k}%
_{2}},\ldots,\mathbf{r}_{\mathbf{k}_{m}}\right]  =\mathbf{r}_{\mathbf{k}_{1}%
}+_{p}\mathbf{r}_{\mathbf{k}_{2}}+_{p}\ldots+_{p}\mathbf{r}_{\mathbf{k}_{m}}.
\label{nm}%
\end{equation}

\end{lemma}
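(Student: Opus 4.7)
The plan is to mirror the closure-and-group argument carried out for ordinary integers in Section \ref{sec-res}, but now performed intrinsically inside the binary commutative ring $\left(\mathbb{Z}_p,+_p,\mathbf{\bullet}_p\right)$. The technical advantage is that all the ring manipulations used around (\ref{r5})--(\ref{m5}) and (\ref{nu1})--(\ref{vr})---associativity, commutativity, distributivity, existence of additive inverses---remain available verbatim, so the componentwise verification promised in the Remark reduces to the observation that the carries and $\operatorname{mod}p^{i}$ reductions are absorbed automatically by the very definitions of $+_p$ and $\mathbf{\bullet}_p$.

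First I would establish closure of $\mathbf{\nu}_m$ on $\left[\mathbf{a}\right]_{\mathbf{b}}$. Substituting the generic representative (\ref{rk}) into (\ref{nm}) and regrouping in $\mathbb{Z}_p$ yields
\begin{equation*}
\mathbf{\nu}_m\left[\mathbf{r}_{\mathbf{k}_1},\ldots,\mathbf{r}_{\mathbf{k}_m}\right] \;=\; m\mathbf{a} \,+_p\, \mathbf{b}\mathbf{\bullet}_p\mathbf{K} \;=\; \mathbf{a} \,+_p\, (m-1)\mathbf{a} \,+_p\, \mathbf{b}\mathbf{\bullet}_p\mathbf{K},
\end{equation*}
with $\mathbf{K}=\mathbf{k}_1+_p\cdots+_p\mathbf{k}_m\in\mathbb{Z}_p$. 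The hypothesis (\ref{ma}) rewrites the middle term as $\mathbf{b}\mathbf{\bullet}_p\mathbf{I}$, so the whole sum equals $\mathbf{a}+_p\mathbf{b}\mathbf{\bullet}_p(\mathbf{I}+_p\mathbf{K}) = \mathbf{r}_{\mathbf{I}+_p\mathbf{K}}(\mathbf{a},\mathbf{b}) \in \left[\mathbf{a}\right]_{\mathbf{b}}$, which settles closure.

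Next, commutativity and total $m$-ary associativity of $\mathbf{\nu}_m$ are inherited directly from commutativity and associativity of the binary $+_p$ in $\mathbb{Z}_p$, exactly as for (\ref{nu1}) in the ordinary-integer example. For the group property I would verify existence and uniqueness of a querelement: the defining equation $\mathbf{\nu}_m[\mathbf{r}_{\mathbf{k}},\ldots,\mathbf{r}_{\mathbf{k}},\widetilde{\mathbf{r}_{\mathbf{k}}}]=\mathbf{r}_{\mathbf{k}}$ forces $\widetilde{\mathbf{r}_{\mathbf{k}}}=-(m-2)\mathbf{r}_{\mathbf{k}}$, and expanding this via (\ref{rk}) together with (\ref{ma}) rewrites it as
\begin{equation*}
\widetilde{\mathbf{r}_{\mathbf{k}}} \;=\; \mathbf{a} \,+_p\, \mathbf{b}\mathbf{\bullet}_p\bigl(-\mathbf{I}-(m-2)\mathbf{k}\bigr) \;\in\; \left[\mathbf{a}\right]_{\mathbf{b}},
\end{equation*}
since $\mathbb{Z}_p$ contains additive inverses; uniqueness is the cancellation property of the abelian group $(\mathbb{Z}_p,+_p)$.

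The main obstacle, in my view, is psychological rather than technical: I must convince the reader that the componentwise character of the representative (\ref{rk}), with its ``$\operatorname{mod}p^{i}$ on the $i$th component'' clause, imposes no extra constraint beyond (\ref{ma}). The partial-sum representation (\ref{y}) invites a digit-by-digit verification in the style of (\ref{n5}), which would be hopeless in closed form. The cleanest way around this, adopted above, is to perform the entire algebra intrinsically in $(\mathbb{Z}_p,+_p,\mathbf{\bullet}_p)$ so that the $\operatorname{mod}p^{i}$ reductions are absorbed inside the ring operations and the single scalar relation (\ref{ma}) suffices.
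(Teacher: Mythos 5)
Your proposal is correct and takes essentially the same route as the paper's own proof: closure follows by substituting the generic representative (\ref{rk}) into (\ref{nm}) and invoking (\ref{ma}), and the group property comes from the querelement $\mathbf{r}_{\mathbf{\bar{k}}}$ with $\mathbf{\bar{k}}=\left(2-m\right)\mathbf{k}-\mathbf{I}$, exactly the index the paper obtains. Your added remarks on inherited associativity/commutativity and on uniqueness via cancellation in $\left(\mathbb{Z}_p,+_p\right)$ merely make explicit what the paper leaves implicit.
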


\begin{proof}
The condition of closure for the $m$-ary addition $\mathbf{\nu}_{m}$ is
$\mathbf{r}_{\mathbf{k}_{1}}+_{p}\mathbf{r}_{\mathbf{k}_{2}}+_{p}\ldots
+_{p}\mathbf{r}_{\mathbf{k}_{m}}=\mathbf{r}_{\mathbf{k}_{0}}$ in the notation
of (\ref{ab1}). Using (\ref{rk}), it provides $m\mathbf{a}+\mathbf{b\bullet}%
_{p}\left(  \mathbf{k}_{1}+_{p}\mathbf{k}_{2}+_{p}\ldots+_{p}\mathbf{k}%
_{m}\right)  =\mathbf{a}+_{p}\mathbf{b}\bullet_{p}\mathbf{k}_{0}$, which is
equivalent to (\ref{ma}), where $\mathbf{I}=\mathbf{k}_{0}-_{p}\left(
\mathbf{k}_{1}+_{p}\mathbf{k}_{2}+_{p}\ldots+_{p}\mathbf{k}_{m}\right)  $. The
querelement $\mathbf{r}_{\mathbf{\bar{k}}}$ \cite{dor3} satisfies%
\begin{equation}
\mathbf{\nu}_{m}\left[  \mathbf{r}_{\mathbf{k}},\mathbf{r}_{\mathbf{k}}%
,\ldots,\mathbf{r}_{\mathbf{k}},\mathbf{r}_{\mathbf{\bar{k}}}\right]
=\mathbf{r}_{\mathbf{k}},
\end{equation}
which has a unique solution $\mathbf{\bar{k}}=\left(  2-m\right)
\mathbf{k}-\mathbf{I}$. Therefore, each element of $\left[  \mathbf{a}\right]
_{\mathbf{b}}$ is invertible with respect to $\mathbf{\nu}_{m}$, and
$\left\langle \left[  \mathbf{a}\right]  _{\mathbf{b}}\mid\mathbf{\nu}%
_{m}\right\rangle $ is a commutative $m$-ary group.
\end{proof}

\begin{lemma}
The $p$-adic analog of the residue class (\ref{ab1}) is a commutative $n$-ary
semigroup $\left\langle \left[  \mathbf{a}\right]  _{\mathbf{b}}%
\mid\mathbf{\mu}_{n}\right\rangle $, if%
\begin{equation}
\mathbf{a}^{n}-\mathbf{a}=\mathbf{b\mathbf{\bullet}_{p}J}, \label{na}%
\end{equation}
where $\mathbf{J}$ is a $p$-adic integer (multiplication shape invariant), and
the nonderived $m$-ary multiplication $\mathbf{\nu}_{m}$ is the repeated
binary product of $n$ representatives%
\begin{equation}
\mathbf{\mu}_{n}\left[  \mathbf{r}_{\mathbf{k}_{1}},\mathbf{r}_{\mathbf{k}%
_{2}},\ldots,\mathbf{r}_{\mathbf{k}_{n}}\right]  =\mathbf{r}_{\mathbf{k}_{1}%
}\mathbf{r}_{\mathbf{k}_{2}}\ldots\mathbf{r}_{\mathbf{k}_{n}}. \label{mn}%
\end{equation}

\end{lemma}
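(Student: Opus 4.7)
The plan is to mirror the proof of the preceding Lemma, now using the repeated binary product in place of the repeated binary sum. First I would write out the closure condition $\mathbf{r}_{\mathbf{k}_{1}}\mathbf{r}_{\mathbf{k}_{2}}\ldots\mathbf{r}_{\mathbf{k}_{n}}=\mathbf{r}_{\mathbf{k}_{0}}$ for some $\mathbf{k}_{0}\in\mathbb{Z}_{p}$ and substitute the explicit form $\mathbf{r}_{\mathbf{k}}=\mathbf{a}+_{p}\mathbf{b}\bullet_{p}\mathbf{k}$ from (\ref{rk}) on both sides.

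Next I would expand the product $\prod_{i=1}^{n}(\mathbf{a}+_{p}\mathbf{b}\bullet_{p}\mathbf{k}_{i})$ in the commutative ring $\mathbb{Z}_{p}$, separating the unique $\mathbf{b}$-free term $\mathbf{a}^{n}$ from the remainder, which factors through $\mathbf{b}$:
\begin{equation*}
\prod_{i=1}^{n}\mathbf{r}_{\mathbf{k}_{i}}=\mathbf{a}^{n}+\mathbf{b}\bullet_{p}S,\quad S=\mathbf{a}^{n-1}\sum_{i}\mathbf{k}_{i}+\mathbf{b}\mathbf{a}^{n-2}\sum_{i<j}\mathbf{k}_{i}\mathbf{k}_{j}+\ldots+\mathbf{b}^{n-1}\mathbf{k}_{1}\cdots\mathbf{k}_{n}.
\end{equation*}
Setting this equal to $\mathbf{a}+_{p}\mathbf{b}\bullet_{p}\mathbf{k}_{0}$, the closure condition reduces to $\mathbf{a}^{n}-\mathbf{a}=\mathbf{b}\bullet_{p}(\mathbf{k}_{0}-S)$. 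Since $S\in\mathbb{Z}_{p}$ ranges freely as $\mathbf{k}_{1},\ldots,\mathbf{k}_{n}$ do, a solution $\mathbf{k}_{0}\in\mathbb{Z}_{p}$ exists for every choice of arguments if and only if $\mathbf{b}$ divides $\mathbf{a}^{n}-\mathbf{a}$ in $\mathbb{Z}_{p}$; this is exactly (\ref{na}), with multiplicative shape invariant $\mathbf{J}\in\mathbb{Z}_{p}$ depending only on $\mathbf{a}$ and $\mathbf{b}$, after which $\mathbf{k}_{0}=\mathbf{J}+S$.

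Once closure is secured, total $n$-ary associativity of $\mathbf{\mu}_{n}$ in (\ref{mn}) is inherited directly from associativity of the binary product in $\mathbb{Z}_{p}$, exactly as in (\ref{mu}) and (\ref{mu1}) for the rational case, and commutativity follows by the same token. Unlike the additive Lemma, however, $\left\langle [\mathbf{a}]_{\mathbf{b}}\mid\mathbf{\mu}_{n}\right\rangle $ is only a semigroup and in general not an $n$-ary group: the analog $\mathbf{\mu}_{n}[\mathbf{r}_{\mathbf{k}},\ldots,\mathbf{r}_{\mathbf{k}},\mathbf{r}_{\bar{\mathbf{k}}}]=\mathbf{r}_{\mathbf{k}}$ of the querelement equation requires $\mathbf{r}_{\mathbf{k}}^{n-2}$ to be a unit in $\mathbb{Z}_{p}$, which need not hold, just as was observed after (\ref{mu1}) for $\mathbb{Z}_{(5,3)}(3,4)$.

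The main obstacle I anticipate is the divisibility step itself: showing that (\ref{na}) is both necessary and sufficient for closure, and that $\mathbf{J}$ is a genuine $p$-adic integer independent of the $\mathbf{k}_{i}$. The natural route is a componentwise argument through the partial-sum representation (\ref{xy}): using the coherency condition (\ref{cog}), one reduces modulo $p^{i}$ at each level $i\geq 1$, compares the $p$-adic valuations of $\mathbf{a}^{n}-\mathbf{a}$ and $\mathbf{b}$, and then verifies that the resulting truncations of $\mathbf{J}$ assemble into a coherent element of $\mathbb{Z}_{p}$. This step parallels the role of (\ref{an2}) and (\ref{am2}) in the rational integer case and is the $p$-adic refinement that the preceding Remark anticipated.
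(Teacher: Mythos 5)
Your proposal is correct and follows essentially the same route as the paper's own proof: substitute the representative form (\ref{rk}) into the closure condition $\mathbf{r}_{\mathbf{k}_{1}}\bullet_{p}\cdots\bullet_{p}\mathbf{r}_{\mathbf{k}_{n}}=\mathbf{r}_{\mathbf{k}_{0}}$, expand the product so that the $\mathbf{b}$-free term $\mathbf{a}^{n}$ is isolated from a remainder divisible by $\mathbf{b}$, and read off (\ref{na}) with $\mathbf{J}=\mathbf{k}_{0}-_{p}\mathbf{J}_{1}$. Your version is in fact more carefully written than the paper's (which states the expanded left-hand side as $n\mathbf{a}+\mathbf{b}\bullet_{p}\mathbf{J}_{1}$ where $\mathbf{a}^{n}+\mathbf{b}\bullet_{p}\mathbf{J}_{1}$ is clearly intended, and omits the associativity, commutativity, and semigroup-versus-group remarks you supply).
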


\begin{proof}
The condition of closure for the $n$-ary multiplication $\mathbf{\mu}_{n}$ is
$\mathbf{r}_{\mathbf{k}_{1}}\bullet_{p}\mathbf{r}_{\mathbf{k}_{2}}\bullet
_{p}\ldots\bullet_{p}\mathbf{r}_{\mathbf{k}_{m}}=\mathbf{r}_{\mathbf{k}_{0}}$.
Using (\ref{rk}) and opening brackets we obtain $n\mathbf{a}+\mathbf{b\bullet
}_{p}\mathbf{J}_{1}=\mathbf{a}+_{p}\mathbf{b}\bullet_{p}\mathbf{k}_{0}$, where
$\mathbf{J}_{1}$ is some $p$-adic integer, which gives (\ref{na}) with
$\mathbf{J}=\mathbf{k}_{0}-_{p}\mathbf{J}_{1}$.
\end{proof}

Combining the conditions (\ref{ma}) and (\ref{na}), we arrive at

\begin{theorem}
The $p$-adic analog of the residue class (\ref{ab1}) becomes a $\left(
m,n\right)  $-ring with $m$-ary addition (\ref{nm}) and $n$-ary multiplication
(\ref{mn})%
\begin{equation}
\mathbb{Z}_{\left(  m,n\right)  }\left(  \mathbf{a}_{q},\mathbf{b}_{q}\right)
=\left\langle \left[  \mathbf{a}_{q}\right]  _{\mathbf{b}_{q}}\mid\mathbf{\nu
}_{m},\mathbf{\mu}_{n}\right\rangle , \label{zab}%
\end{equation}
when the $p$-adic integers $\mathbf{a}_{q},\mathbf{b}_{q}\in\mathbb{Z}_{p}$
are solutions of the equations%
\begin{align}
m\mathbf{a}_{q}  &  =\mathbf{a}_{q}\left(  \operatorname{Mod}_{p}%
\mathbf{b}_{q}\right)  ,\label{aq1}\\
\mathbf{a}_{q}^{n}  &  =\mathbf{a}_{q}\left(  \operatorname{Mod}_{p}%
\mathbf{b}_{q}\right)  . \label{aq2}%
\end{align}

\end{theorem}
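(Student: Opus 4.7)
The plan is to combine the two preceding lemmas and then verify the one extra ingredient they do not individually supply, namely polyadic distributivity. First, I would observe that the hypothesis (\ref{aq1}) is exactly the $p$-adic congruence form of condition (\ref{ma}): by the definition of $\operatorname{Mod}_{p}\mathbf{b}_q$ from (\ref{ra1}), the statement $m\mathbf{a}_q = \mathbf{a}_q(\operatorname{Mod}_p\mathbf{b}_q)$ says that $m\mathbf{a}_q - \mathbf{a}_q = (m-1)\mathbf{a}_q$ is of the form $\mathbf{b}_q \bullet_p \mathbf{I}$ for some $\mathbf{I}\in\mathbb{Z}_p$. Hence by the first lemma, $\langle [\mathbf{a}_q]_{\mathbf{b}_q} \mid \mathbf{\nu}_m\rangle$ is a commutative $m$-ary group. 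Symmetrically, (\ref{aq2}) rewrites (\ref{na}) with $\mathbf{J}$ playing the role of the multiplication shape invariant, so by the second lemma $\langle[\mathbf{a}_q]_{\mathbf{b}_q}\mid\mathbf{\mu}_n\rangle$ is a commutative $n$-ary semigroup.

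Next, I would check that the $m$-ary addition $\mathbf{\nu}_m$ and the $n$-ary multiplication $\mathbf{\mu}_n$ interact in a way that makes the structure (\ref{zab}) a genuine polyadic ring, not merely a pair of independently closed operations. Since both $\mathbf{\nu}_m$ and $\mathbf{\mu}_n$ are defined in (\ref{nm}) and (\ref{mn}) as iterated binary $+_p$ and $\bullet_p$ in $\mathbb{Z}_p$, polyadic distributivity reduces to the ordinary binary distributivity $\mathbf{x}\bullet_p(\mathbf{y}+_p\mathbf{z})=\mathbf{x}\bullet_p\mathbf{y}+_p\mathbf{x}\bullet_p\mathbf{z}$ in the ambient commutative ring $\mathbb{Z}_p$. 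Expanding a generic term $\mathbf{\mu}_n[\mathbf{r}_{\mathbf{k}_1},\ldots,\mathbf{\nu}_m[\mathbf{s}_1,\ldots,\mathbf{s}_m],\ldots,\mathbf{r}_{\mathbf{k}_n}]$ and regrouping via binary distributivity then yields exactly $\mathbf{\nu}_m$ applied componentwise to $\mathbf{\mu}_n$, which is the required polyadic distributive identity in the sense of \cite{duplij2022}.

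Finally, I would confirm that after these expansions the resulting elements still lie in $[\mathbf{a}_q]_{\mathbf{b}_q}$, but this is automatic: closure of $\mathbf{\nu}_m$ and $\mathbf{\mu}_n$ on $[\mathbf{a}_q]_{\mathbf{b}_q}$ was already established by the two lemmas, so any nested expression built from them remains in the class. Combining these three steps --- $m$-ary abelian group structure, $n$-ary semigroup structure, and polyadic distributivity --- gives the $(m,n)$-ring structure (\ref{zab}) under the hypotheses (\ref{aq1})--(\ref{aq2}).

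The main obstacle I anticipate is not the algebra, which is essentially formal once the lemmas are in hand, but rather the bookkeeping of the $p$-adic \emph{componentwise} reduction $\operatorname{mod}p^{i}$ inherent in $+_p$ and $\bullet_p$. In particular, one must make sure that the rewriting of (\ref{ma}) and (\ref{na}) into the congruence-style conditions (\ref{aq1}) and (\ref{aq2}) respects the partial-sum representation (\ref{xy}) at every level $i$, so that the existence of the invariants $\mathbf{I},\mathbf{J}\in\mathbb{Z}_p$ is genuine and not merely formal. Once this is verified, the theorem follows directly from the two lemmas by conjunction.
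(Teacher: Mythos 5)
Your proposal is correct and follows essentially the same route as the paper: the published proof simply notes that (\ref{aq1})--(\ref{aq2}) are equivalent to (\ref{ma}) and (\ref{na}), invokes the two lemmas to get the $m$-ary group and $n$-ary semigroup structures, and concludes. The only difference is that you explicitly verify polyadic distributivity by reducing it to the binary distributivity of $+_{p}$ and $\bullet_{p}$ in $\mathbb{Z}_{p}$ --- a step the paper leaves implicit in the theorem's proof (it is spelled out only in the earlier integer example) --- so your write-up is, if anything, slightly more complete.
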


\begin{proof}
The conditions (\ref{aq1})--(\ref{aq2}) are equivalent to (\ref{ma}) and
(\ref{na}), respectively, which shows that $\left[  \mathbf{a}_{q}\right]
_{\mathbf{b}_{q}}$ (considered as a set of representatives (\ref{rk})) is
simultaneously an $m$-ary group with respect to $\mathbf{\nu}_{m}$, and an
$n$-ary semigroup with respect to $\mathbf{\mu}_{n}$, and is therefore a
$\left(  m,n\right)  $-ring.
\end{proof}

If we work in the partial sum representation (\ref{y}), the procedure of
finding the digits of $p$-adic integers $\mathbf{a}_{q},\mathbf{b}_{q}%
\in\mathbb{Z}_{p}$ such that $\left[  \mathbf{a}_{q}\right]  _{\mathbf{b}_{q}%
}$ becomes a $\left(  m,n\right)  $-ring with initially fixed arities is
recursive. To find the first digits $\alpha_{0}$ and $\beta_{0}$ that are
ordinary integers, we use the equations (\ref{an1})--(\ref{am2}), and for
their arity shape \textsc{Table \ref{T1}}. Next we consider the second
components of (\ref{y}) to find the digits $\alpha_{1}$ and $\beta_{1}$ of
$\mathbf{a}_{q}$ and $\mathbf{b}_{q}$ by solving the equations (\ref{ma}) and
(\ref{na}) (these having initially given arities $m$ and $n$ from the first
step) by application of the exact formulas from \cite{xu/dai}. In this way, we
can find as many digits $\left(  \alpha_{0},,\alpha_{i_{\max}}\right)  $ and
$\left(  \beta_{0},,\beta_{i_{\max}}\right)  $ of $\mathbf{a}_{q}$ and
$\mathbf{b}_{q}$, as needed for our accuracy preferences in building the
polyadic ring of $p$-adic integers $\mathbb{Z}_{\left(  m,n\right)  }\left(
\mathbf{a}_{q},\mathbf{b}_{q}\right)  $ (\ref{zab}).

Further development and examples will appear elsewhere.

\section{\textsc{Conclusions}}

The study of \textquotedblleft external\textquotedblright\ residue class
properties is a foundational subject in standard number theory. We have
investigated their \textquotedblleft internal\textquotedblright\ properties to
understand the algebraic structure of the representative set of a fixed
residue class. We found that, if the parameters of a class satisfy some
special \textquotedblleft quantization\textquotedblright\ conditions, the set
of representatives becomes a polyadic ring. We introduced the arity shape, a
surjective-like mapping of the residue class parameters to the arity of
addition $m$ and arity of multiplication $n$, which result in commutative
$\left(  m,n\right)  $-rings (see \textsc{Table \ref{T1}}).

We then generalized the approach thus introduced to $p$-adic integers by
defining an analog of a residue class for them. Using the coherent
representation for $p$-adic integers as partial sums we defined 
the $p$-adic analog of the \textquotedblleft quantization\textquotedblright%
\ conditions in a componentwise manner for~when the set of $p$-adic representatives form a polyadic
ring. Finally, we proposed a recursive procedure to find any desired digits of
the $p$-adic residue class parameters.

The presented polyadic algebraic structure of $p$-adic numbers may lead to new
symmetries and features in $p$-adic mathematical physics and the corresponding
particle models.

\bigskip

\textbf{Acknowledgements.} The author is grateful to Branko Dragovich, Mike
Hewitt, Vladimir Tkach and Raimund Vogl for useful discussions and valuable help.

%\newpage
%\pagestyle{fancyref}
\pagestyle{emptyf}
\mbox{}
\vskip 0.5cm

\end{document}